\documentclass{elsarticle}

\usepackage{JBpack}
\usepackage{mathtools}
\DeclarePairedDelimiter{\ceil}{\lceil}{\rceil}

\usepackage{booktabs}
\usepackage{algorithm, algpseudocode}

\def\Hop{\overline{H_{y,Y^K}}}
\def\pinm{\pi_{n,m}}

\journal{Journal of Computational and Applied Mathematics}

\begin{document}

\begin{frontmatter}

\title{\emph{A posteriori} Error Estimation for the Spectral Deferred Correction Method}

\author{Jehanzeb H. Chaudhry\fnref{Zeb}}
\author{J.B. Collins\corref{cor}\fnref{Jeb}}

\cortext[cor]{Corresponding Author}
\fntext[Zeb]{Department of Mathematics and Statistics, The University of New Mexico, Albuquerque, NM 87131, United States}
\fntext[Jeb]{Department of Mathematics, University of Mary Washington, Fredericksburg, VA 22401, United States}

\ead{jehanzeb@unm.edu (J.H. Chaudhry), jcollin2@umw.edu}

\begin{abstract}
    The spectral deferred correction method is a variant of the deferred correction method for solving ordinary differential equations.  A benefit of this method is that is uses low order schemes iteratively to produce a high order approximation.  In this paper we consider adjoint-based \emph{a posteriori}  analysis to estimate  the error in a quantity of interest of the solution.  This error formula is derived by first developing a nodally equivalent finite element method to the spectral deferred correction method.  The error formula is then split into various terms, each of which characterizes a different component of the error.  These components may be used to determine the optimal strategy for changing the method parameters to best improve the error.
\end{abstract}

\end{frontmatter}

\section{Introduction}
\label{sec:introduction}

In this article we  consider \emph{a posteriori} error analysis for the spectral deferred correction (SDC) method applied to a system of  nonlinear ordinary differential equations (ODEs) of the form,
\begin{equation}
\begin{cases}
\dot{y}(t) = f(y(t),t), \quad t \in (0,T],\\
y(0) = y_0.
\end{cases}
\label{eq:ODE}
\end{equation}
Here $y(t), y_0 \in \Rbb^d$, $\dot{y}  = d{y}/ d{t}$ and $f:\Rbb^d \by \Rbb \rightarrow \Rbb^d$.  The goal of the \emph{a posteriori} analysis is to derive accurate error estimates in the numerically computed value of a bounded linear functional of the solution, the so called  quantity-of-interest (QoI),

\begin{equation}
\label{eq:qoi_definition}
	Q(y) =  \< y,\psi \>_{[0,T]} + (y(T), \psi_T), 
\end{equation}
where $(\cdot, \cdot)$ denotes the standard Euclidean inner product on $\Rbb^d$,  $\< \cdot, \cdot \>_{\omega}  = \int_{\omega} (\cdot, \cdot) \, dt$ denotes the standard $L^2(\omega;\Rbb^d)$ inner product on a time domain $\omega$,  $\psi \in L^2([0,T];\Rbb^d)$ and $\psi_T \in \Rbb^d$.  
Examples include $\psi(t) = 1/T$ corresponding to  a time average for $d=1$, or $\psi_T = [1/d, 1/d, \ldots, 1/d]^T$ corresponding to the average of the components of $y$ at the final time.

Spectral deferred correction (SDC) methods are variants of the traditional defect correction methods~\cite{Pereyra1967,Bohmer1984} and were introduced in~\cite{dutt:2000}. \emph{A priori} analysis and numerical studies of the method and its modifications have been carried out in~\cite{dutt:2000,Minion2003,Minion2004,Layton2004,Layton2008,Hansen2011}. All deferred correction methods share the basic idea of iteratively arriving at solutions of higher accuracy. In particular,  a time interval is divided into  smaller subintervals and  a lower order method is used on these subintervals to approximate the error, which is then added back to the approximate solution, and hence higher order accuracy is achieved. Traditional methods of approximating the error equation using differential operators are often unstable~\cite{Trefethen1987, Higham1993, dutt:2000}. In SDC methods, stability is achieved by replacing the differential operators by a corresponding Picard integral equation, which is then approximated on appropriately chosen subintervals based on Gaussian quadrature nodes~\cite{dutt:2000}. 
SDC methods are able to achieve high accuracy at a  computational cost comparable to that of the Runge-Kutta methods~\cite{Minion2010}.
SDC methods have been applied to a number of important physical problems and partial differential equations (PDEs)~\cite{Bourlioux2003, Layton2004, Minion2004, Liu2008}. SDC methods have also seen recent interest for their use in developing parallel-in-time methods~\cite{Minion2010,Emmett2012}.

The range of scales and the complexity of processes involved in practical scientific and engineering applications means that numerical simulations often have  a significant error~\cite{Estep:Larson:00,Fish:2009}. Reliable use of numerical simulations to make model predictions in science and engineering applications necessitates that this error in a QoI be quantified. Error estimation is also essential for constructing adaptive algorithms which identify the correct discretization parameters to be refined to arrive at more accurate solutions. 

\color{black}
In this paper, we develop an \emph{a posteriori} error analysis for SDC methods using variational analysis, computable residuals, and adjoint problems. Such \emph{a posteriori} error estimates are widely used for a wide range of numerical methods including finite element methods \cite{Estep:Larson:00,estep2,eehj1, AO2000, rannacherbook, Barth2004, BeckerRannacher-2001, GilesSuli-2002,Chaudhry17}, numerous time integrators like the IMEX methods, backward differentiation methods, Runge-Kutta, Lax-Wendroff,  multirate methods~\cite{CEG+2015,Chaudhry2019,collins:2015,collins:2014, Logg2004,EJL04}, parallel-in-time and domain decomposition methods~\cite{JESVS15,Chaudhry2019b}.

A recent \textit{a priori} analysis of the SDC method further indicates the various sources of error in the method~\cite{Causley2019}. These sources are identified as the error at the current time point, the error from the previous iterate, and the numerical integration error that comes from the total number of quadrature nodes used for integration. The resulting \textit{ a posteriori} error  estimates in this article also have the useful feature that the total error is decomposed as a sum of contributions from various aspects of discretization and therefore can provide insight into the effect of different choices for the parameters controlling the discretization.  We demonstrate how these error components can be used to determine effective adaptive error control of the numerical parameters of the method.
\color{black}
The paper is organized in the following manner.  In Section \ref{sec:SDC}, we introduce the SDC method, and develop the nodally equivalent finite element method.  We also look at how the order of the finite element method is chosen.  Section \ref{sec:err} contains our main error representation formula, introduces the various error contributions, and discusses how nonlinear problems are handled.  Finally, in Section \ref{sec:results}, we present results using the error representation formula, demonstrating the meaning of the various components of the error.  We examine both linear and nonlinear examples, and also look at how the order of the finite element method is important to computing an accurate error.

\section{The SDC Method and its Nodally Equivalent Finite Element Method}\label{sec:SDC}

This section introduces the SDC method, along with all relevant notation.  In order to perform adjoint-based error estimation, we require a globally defined function to represent the approximate solution.  Therefore, the rest of this section is devoted to developing and analyzing a finite element method whose approximation is equivalent in a certain sense to the SDC approximation.  We examine this finite element method by first looking at the convergence of the method, then examine the optimal order of convergence to match with the corresponding SDC approximation.  Finally, we look at how to compute the higher order finite element approximation using only the SDC approximation.

	\subsection{SDC Method}
		The SDC method, as initially defined in \cite{dutt:2000}, is an iterative method that uses two levels of discretizations.  The main goal of the method is to approximate the solution to \eqref{eq:ODE} on the outer time grid,
		\begin{equation}
		\label{eq:outer_grid}
			0 < t_1 < t_2 < \ldots < t_n < \ldots < t_N=T.	
		\end{equation}
		 		We denote the intervals  $I_n = [t_n, t_{n+1}]$ and the length of each interval by $\D t_n = t_{n+1} - t_n$. We subdivide each interval into subintervals, defined by $M+1$ subnodes denoted by $\tm$,
		\[ t_n = \tin{0} < \tin{1} < \ldots < \tm < \ldots < \tin{M} = t_{n+1}.\]
				These nodes are chosen to be the Gauss-Lobatto nodes over $I_n$ to maximize the accuracy of the interpolation below.  The length the subintervals is denoted by $\D \tm = \tin{m+1} - \tm$ and we denote each subinterval by $\Im = [\tm, \tin{m+1}]$.  
		
		To describe the SDC method, we narrow our focus to just one interval $[t_n, t_{n+1}]$.  We define the iterative procedure on this interval, which is then repeated for each interval over the domain.  We let $Y_{n,m}^k \approx y(\tm)$ denote the approximation to the exact solution at $\tm$ after $k$ iterations of the SDC method.  
		The spectral deferred correction method is implemented as follows.  We begin with an initial approximation $Y^0_{n,m}$ for each subnode $\tm$.  For our purposes, we let the initial approximation be the initial condition repeated across all subnodes. 
		We then proceed iteratively using the equation using either an explicit or an implicit method. The explicit SDC method is \cite{minion:2003},
		\begin{align}
			Y_{n,m+1}^{k+1} &= Y_{n,m}^{k+1} + \D \tm [f(Y_{n,m}^{k+1}, \tm) - f(Y_{n,m}^k,\tm)] + \intm \Sm f^k \,dt 
			\label{eq:SDC_explicit}	
		\end{align}		
		where $\Sm f^k$ is the polynomial interpolant  over the interval $I_n$ using the $M+1$ values $f(Y^k_{n,m},\tm)$. 
		\color{black}
		The implicit SDC method, suitable for stiff problems, is \cite{Emmett2012},
		\begin{align}
			Y_{n,m+1}^{k+1} &= Y_{n,m}^{k+1} + \D \tm [f(Y_{n,m+1}^{k+1}, t_{n,m+1}) - f(Y_{n,m+1}^k,t_{n,m+1})] + \intm \Sm f^k \,dt 
			\label{eq:SDC_implicit}	
		\end{align}				
		
		Both SDC methods converge with order  $O(\D t_n^{ \min\{K,M\}})$ where $K$ is the number of SDC iterations~\cite{dutt:2000}.
		 The stability properties of the SDC methods are  examined in \cite{dutt:2000,minion:2003}.
		 \color{black}

	\subsection{Finite Element Method}
		To implement adjoint-based error estimation, a global definition of the approximation in a variational setting is needed.  This may be achieved by using a finite element method.  Here we define the finite element method on the subinterval $\Im$.  
		We use the same grid as for the SDC method above in \eqref{eq:outer_grid} to define the set of piecewise continuous functions,
				\[ \Ccal^q = \{ w \in C^0([0,T]; \Rbb^d) : w|_{\Im} \in \Pcal^q(\Im),  0 \leq n \leq N-1 , 0 \leq m \leq M-1 \}, \]
		where $\Pcal^q(\Im)$ is the standard space of polynomials of degree less than or equal to $q$.  The continuous Galerkin method of order $q+1$, cG($q$), to solve \eqref{eq:ODE} is defined interval-wise by:
		
		Find $Y(t) \in \Ccal^q$ such that $Y(0) = y_0$ and for $n = 0, \ldots, N-1$ and $m = 0, \ldots, M-1$,
		\begin{align}
			\< \dot{Y}, v \>_{\Im} &= \<f(Y,t), v \>_{\Im}, \qquad \forall v \in \Pcal^{q-1}(\Im).
			 \label{eq:CG}
		\end{align}

	\subsection{Nodally Equivalent FEM Method}
		In this section, we develop a finite element method that is nodally equivalent to the SDC method approximation.  Nodal equivalence is an idea that was developed in \cite{collins:2015} for Runge-Kutta and multi-step methods and in  \cite{CEG+2015,chaudhry:2017} for IMEX schemes.  Two approximations are defined to be nodally equivalent if their approximations are equal at the nodes $\{t_n\}$ for $0 \leq n \leq N$.  
		\color{black}
		Since the FEM method is nodally equivalent to the SDC method, it inherits the stability and convergence properties of the SDC method at the nodes.
		\color{black}
		Methods are nodally equivalent if they generate nodally equivalent approximations.  Therefore, the finite element method we develop in this section will be a function $Y^k(t) \in \Ccal^q$ with the property, 
		\[ Y^k(\tm) = Y^k_{n,m}.\]
		
		As with the SDC method, the finite element will be defined iteratively.  We assume the finite element solution at the $k^{\text{th}}$ iteration, $Y^k(t)$, has been computed.  We then define the equivalent finite element method as:
		
		Find $Y^{k+1}(t) \in \Ccal^q$ such that $Y^{k+1}(0) = y_0$, and for $n = 0, \ldots, N-1$, and $m = 0, \ldots, M-1$,
										\color{black}
		\begin{align}
			\< \dot{Y}^{k+1}, v_m \>_{\Im} &= \< f(Y^{k+1},t) - f(Y^k,t),v_m \>_{R,\Im} \label{eq:equiv:FEM}  + \< \Sm f^k, v_m \>_{\Im},  
		\end{align}
		for all $v_m \in \Pcal^{q-1}(\Im)$. Here $\<\cdot, \cdot \>_{R,\Im}$ represents the  approximation of the integral using a quadrature rule. The two choices for quadrature we consider the left-hand rule and the right-hand rule, denoted $R = LHR$ and $R = RHR$ respectively.

		\color{black}
		We now show that the finite element method defined by \eqref{eq:equiv:FEM} is indeed nodally equivalent to the SDC method.
		
		\begin{thm} \label{thm:equiv}
			Assume $\{Y^{k+1}_{n,m}\}_{n,m=0}^{N,M}$ is a solution of the SDC  method defined by 
			\color{black} 
			either \eqref{eq:SDC_explicit} or \eqref{eq:SDC_implicit} 
			\color{black}  and $Y^{k+1}(t)$ is the  finite element solution defined by \eqref{eq:equiv:FEM} for 
			\color{black} 
			$R = LHR$ or $R = RHR$ respectively.  
			\color{black}
			Then $Y^{k+1}(\tm) = Y^{k+1}_{n,m}$ for all $n = 0 \ldots, N$ and $m = 0, \ldots, M$.	That is, the finite element method \eqref{eq:equiv:FEM} is nodally equivalent the the SDC method.
		\end{thm}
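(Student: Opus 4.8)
The plan is to exploit the freedom in the test function $v_m$ and collapse the variational identity \eqref{eq:equiv:FEM} onto the algebraic recursion that defines the SDC method. The key observation is that the constant function $v_m \equiv 1$ lies in $\Pcal^{q-1}(\Im)$ whenever $q \geq 1$, so it is an admissible test function on every subinterval. With this choice the left-hand side of \eqref{eq:equiv:FEM} collapses, by the fundamental theorem of calculus, to the increment of the finite element solution across the subinterval,
\[
  \< \dot{Y}^{k+1}, 1 \>_{\Im} = \int_{\Im} \dot{Y}^{k+1}\, dt = Y^{k+1}(\tin{m+1}) - Y^{k+1}(\tm),
\]
a relation that holds independently of the polynomial degree $q$. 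Since $Y^{k+1} \in \Ccal^q \subset C^0([0,T];\Rbb^d)$, these nodal values are globally well defined.

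Next I would evaluate the right-hand side of \eqref{eq:equiv:FEM} under the same choice $v_m \equiv 1$. The interpolation term $\< \Sm f^k, 1 \>_{\Im} = \int_{\Im} \Sm f^k\, dt$ is already identical to the integral appearing in both \eqref{eq:SDC_explicit} and \eqref{eq:SDC_implicit}. The quadrature term is where the two cases separate: the left-hand rule samples the integrand at the left endpoint $\tm$, producing $\D \tm[f(Y^{k+1}(\tm),\tm) - f(Y^k(\tm),\tm)]$, which matches the explicit scheme \eqref{eq:SDC_explicit}, whereas the right-hand rule samples at $\tin{m+1}$, producing $\D \tm[f(Y^{k+1}(\tin{m+1}),\tin{m+1}) - f(Y^k(\tin{m+1}),\tin{m+1})]$, which matches the implicit scheme \eqref{eq:SDC_implicit}. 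Thus, once the quadrature rule is paired with the scheme type exactly as in the hypothesis, the finite element nodal values $Y^{k+1}(\tm)$ satisfy the very same recursion as the SDC iterates $Y^{k+1}_{n,m}$.

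It then remains to deduce equality of the two sequences by induction. I would run a double induction: an outer induction on the iteration index $k$ and an inner induction on the node index $(n,m)$ in lexicographic order. For the outer base case $k = 0$ both methods are initialized by the constant value $y_0$, so nodal equivalence is immediate. Assuming $Y^k(\tm) = Y^k_{n,m}$ for all $(n,m)$, the interpolant $\Sm f^k$ and the values $f(Y^k,\cdot)$ entering \eqref{eq:equiv:FEM} coincide with those in the SDC recursion, so the two recursions for iteration $k+1$ are literally the same system of equations, subject to the same initial condition $Y^{k+1}(0) = y_0$. The inner induction on $(n,m)$, using continuity of $Y^{k+1}$ to pass the terminal value on one interval to the initial value on the next, then propagates equality from node to node.

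The main obstacle I anticipate is the implicit case $R = RHR$. There the recursion for $Y^{k+1}_{n,m+1}$ is implicit in the unknown, so concluding that the finite element and SDC nodal values coincide requires that this implicit relation determine its solution uniquely; in the explicit case the next nodal value is produced by an explicit formula and no such issue arises. I would handle this by invoking the standard well-posedness of the implicit step, namely that for $\D \tm$ sufficiently small the defining map for $Y^{k+1}_{n,m+1}$ is a contraction and hence the solution assumed to exist in the hypothesis is the unique one, so both methods select the same value. A secondary point worth checking is simply that $v_m \equiv 1$ is admissible, i.e.\ that $q \geq 1$, and that no information carried by the higher-degree test functions in \eqref{eq:equiv:FEM} is needed for the nodal statement, which indeed it is not.
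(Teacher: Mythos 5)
Your proposal is correct and follows essentially the same route as the paper: test \eqref{eq:equiv:FEM} with $v_m \equiv 1$, collapse the left-hand side by the fundamental theorem of calculus, and observe that the left- and right-hand quadrature rules reproduce the explicit and implicit SDC recursions respectively. The explicit double induction on $(k,n,m)$ and the remark on uniqueness of the implicit step are careful additions that the paper leaves implicit, but they do not change the underlying argument.
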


		\begin{proof}
		\color{black}
		We show the proof for the equivalence to \eqref{eq:SDC_explicit} when $R = LHR$ and note that the proof is identical for the implicit method, except for the use of $R = RHR$ in one particular step.
		\color{black} 
			We examine an arbitrary subinterval $\Im$ and set the test function $v_m = 1$ in equation \eqref{eq:equiv:FEM}.  First, we note that since $Y^{k+1}(t) \in \Pcal^q(\Im)$, we have that 
			\[ \< \dot{Y}^{k+1}(t), v_m \>_{\Im} = [Y^{k+1}(\tin{m+1}) -Y^{k+1}(\tm)].\]
			
			Next, we consider the quadrature term in \eqref{eq:equiv:FEM}.  Evaluating this with the chosen test function we obtain,
			\begin{equation}
			\label{eq:usage_of_lhr}
			\begin{aligned}			
				\< f(Y^{k+1},t),v_m \>_{LHR,\Im} &= \D \tm f(Y^{k+1}(\tm),\tm) \\
				\< f(Y^k,t),v_m \>_{LHR,\Im} &= \D \tm f(Y^{k}(\tm),\tm)
			\end{aligned}
			\end{equation}
			
			Finally, considering the last term, we see that,
			\begin{align}
				\< \Sm f^k, v_m \>_{\Im} = \intm \Sm f^k \,dt.
			\end{align}
			
			Combining all of this we see that when $v_m =1$, \eqref{eq:equiv:FEM} is written as,
			\begin{align}
				Y^{k+1}(\tin{m+1}) -Y^{k+1}(\tm) &= \D \tm [f(Y^{k+1}(\tm),\tm) - f(Y^{k}(\tm),\tm)] \\
				&\quad + \intm \Sm f^k \,dt. \notag
			\end{align}
			This is equivalent to the update formula \eqref{eq:SDC_explicit}.  Therefore the method defined by \eqref{eq:equiv:FEM} is nodally equivalent to the explicit SDC method.

			\color{black}
				As stated above, the proof for the implicit method \eqref{eq:SDC_implicit} is identical except for the use of $R=RHR$ in \eqref{eq:usage_of_lhr}.
			\color{black} 
		\end{proof}

		\begin{rem}
		Note that \eqref{eq:equiv:FEM} can be derived from the typical continuous Galerkin method \eqref{eq:CG}, with interesting use of quadrature.  To see this, consider the continous Galerkin method for $Y^{k+1}(t)$ from \eqref{eq:CG}:
		
		Find $Y^{k+1}(t) \in \Ccal^q$ such that for $m = 0, 1, \ldots, M$,
		\[ \<\dot{Y}^{k+1},v_m\>_{\Im}  = \<f(Y^{k+1},t),v_m\>_{\Im}  \hspace{.2in} \forall v_m \in \Pcal^{q-1}(\Im).\]
		Now add and subtract $\< \Sm f,v) \>_{\Im}$ to the right side,
		\begin{align}
			\<\dot{Y}^{k+1},v_m\>_{\Im} &= \<f(Y^{k+1},t),v_m\>_{\Im} \pm \< \Sm f^k,v_m\>_{\Im} \\
			&= \<f(Y^{k+1},t) - \Sm f^k,v_m\>_{\Im}+ \<\Sm f^k,v_m \>_{\Im}. \notag
		\end{align}
		Next, to obtain \eqref{eq:equiv:FEM}, we approximate the first integral on the right hand side with \color{black}
		either the left-hand rule or the right-hand rule.
		\color{black}
		Since the interpolation operator $\Sm$ does not change the value at the interpolation nodes, once quadrature is applied, the operator $\Sm$  in the first term disappears and we obtain the equivalent FEM method \eqref{eq:equiv:FEM},
		\color{black}
		 with $R = LHR$ or $R=RHR$ corresponding to the choice of the left-hand rule or the right-hand rule respectively.
		 \color{black}

	\end{rem}

	\subsection{Convergence of the Finite Element Method}
		In this  section we show that the finite element solution $Y^k(t)$ converges to the exact solution $y(t)$ as $\D t_n \goesto 0$.  We first note that the SDC method is at least second order accurate at the nodes $\tm$ as long as at least two iterations are done.  Since $Y^k(t)$ agrees with the SDC approximation at these nodes, we have that,
		\begin{align}
			| Y^k(\tm) - y(\tm) | \leq C \D t_n^2	
		\end{align}
		for $m = 0, \ldots, M$ for each interval $I_n$ in the outer discretization. Here $| \cdot |$ denotes the standard Euclidean norm on $\mathbb{R}^d$.
		We next show convergence of the finite element solution in the infinity (or max) norm, that is, for all values of $t$ and not just at the nodes $\tm$.

				\begin{thm}
			If $Y^k(t)$ is a cG(1) solution to \eqref{eq:equiv:FEM}, then for sufficiently small $\D t_n$ and $k \geq 2$,
			\begin{align}
				\|y - Y^k \|_{\infty} = \sup_{t \in [0,T]}|y(t) - Y^k(t) | \leq (C_1 + C)\D t_n^2	,
			\end{align}
			where $C_1$ and $C$ are constants independent of $\D t_n$.
		\end{thm}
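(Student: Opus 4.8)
The plan is to compare $Y^k$ with the exact solution $y$ through the continuous piecewise linear nodal interpolant of $y$, exploiting the second-order nodal accuracy already recorded just above the statement together with a classical interpolation estimate. Let $\widehat{y} \in \Ccal^1$ denote the continuous piecewise linear interpolant of $y$ at the Gauss--Lobatto subnodes $\tm$, so that $\widehat{y}(\tm) = y(\tm)$ for every $n$ and every $m$. For any $t \in [0,T]$ the triangle inequality gives
\[
  |y(t) - Y^k(t)| \leq |y(t) - \widehat{y}(t)| + |\widehat{y}(t) - Y^k(t)|,
\]
and I would estimate the two terms on the right separately.

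For the first term I would invoke the standard interpolation error bound for cG(1). Assuming $f$ is smooth enough that $y \in C^2([0,T];\Rbb^d)$, linear interpolation on each subinterval $\Im$ satisfies $\sup_{t \in \Im}|y(t) - \widehat{y}(t)| \leq \tfrac{1}{8}(\D\tm)^2 \sup_{[0,T]}|\ddot{y}|$. Since $\Im \subset I_n$ forces $\D\tm \leq \D t_n$, this is bounded by $C_1 \D t_n^2$ with $C_1 = \tfrac{1}{8}\sup_{[0,T]}|\ddot{y}|$ independent of $\D t_n$, and taking the supremum over all subintervals preserves the bound. For the second term, note that $Y^k$ is piecewise linear because cG(1) corresponds to $q=1$, so $\widehat{y} - Y^k$ is again piecewise linear with the same breakpoints $\tm$. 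A linear function attains its extrema at the endpoints of its interval, hence on each $\Im$,
\[
  \sup_{t \in \Im}|\widehat{y}(t) - Y^k(t)| \leq \max_{n,m}\,|\widehat{y}(\tm) - Y^k(\tm)| = \max_{n,m}\,|y(\tm) - Y^k(\tm)|.
\]
The nodal estimate $|y(\tm) - Y^k(\tm)| \leq C\,\D t_n^2$, valid for $k \geq 2$ and sufficiently small $\D t_n$, then bounds this by $C\,\D t_n^2$. Combining the two estimates and taking the supremum over $t \in [0,T]$ yields $\|y - Y^k\|_\infty \leq (C_1 + C)\,\D t_n^2$, as claimed.

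Both the interpolation bound and the endpoint-maximum argument are routine, so the real content of the proof is in justifying the hypotheses rather than in any intricate calculation. The delicate point is ensuring $C_1$ is genuinely independent of $\D t_n$: this requires $\sup_{[0,T]}|\ddot{y}|$ to be finite, and hence a regularity assumption on $f$ guaranteeing $y \in C^2$, which I would state explicitly. A second point worth checking is that the breakpoints of $Y^k$ coincide exactly with the interpolation nodes $\tm$, so that $\widehat{y} - Y^k$ has no interior kinks on any $\Im$ and the extremum-at-endpoints reasoning is valid; this is immediate from the definition of $\Ccal^1$ but should be noted. Everything else follows from the already-established second-order nodal accuracy inherited from the SDC method.
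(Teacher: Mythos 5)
Your argument is correct and is, in substance, the proof the paper intends: the paper itself gives no details, deferring to Theorem 2 of the cited reference \cite{chaudhry:2017}, whose proof follows exactly your route --- triangle inequality through the continuous piecewise linear nodal interpolant, the classical $O(h^2)$ interpolation estimate (requiring $y \in C^2$) for the first term, and the inherited second-order nodal accuracy combined with the fact that the norm of an affine function on a subinterval is maximized at an endpoint for the second term. The two caveats you flag, the regularity assumption on $f$ that makes $C_1$ finite and independent of the step size, and the coincidence of the breakpoints of $Y^k$ with the interpolation nodes, are precisely the right hypotheses to make explicit.
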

		\begin{proof}
			The proof is identical to the proof of Theorem 2 found in \cite{chaudhry:2017}.
		\end{proof}
		
		Therefore we see that as $\D t_n \goesto 0$, $Y^k(t) \goesto y(t)$.  	
	\subsection{Order of the Finite Element Method}
		While using a nodally equivalent cG(1) finite element approximation will certainly converge  and agree with the SDC approximation at the nodes $\tm$, the accuracy of $Y^k(t)$ over the whole domain $[0,T]$ will not necessarily equal the accuracy of the SDC approximation.  The accuracy at the nodes  $t_n$ will always be the same, meaning the finite element method could have super-convergence at the nodes.  However, if our quantity of interest depends on the entire domain and not only the nodes, then the order of accuracy of a cG(1) approximation could be quite lower than the SDC order of accuracy.

		To obtain an sufficiently accurate approximation to the quantity of interest, we tailor the order $q$ of the finite element approximation depending on the accuracy of the SDC approximation.  A cG($q$) finite  element approximation to \eqref{eq:equiv:FEM} has an $L^2$ error of $E_{FEM} =  O(\D t_{n,M}^{q+1})$, where $\D t_{n,M} = \max(\D \tm)$.  The aim is to choose a value of $q$ such that $Y^k(t)$ has an error $E_{SDC} =   O(\D t_n^{ \min\{K,M\}})$, where $E_{SDC}$ is  the accuracy of the SDC method at the nodes after $K$ iterations.  
		
		To obtain the appropriate value of $q$, we equate the errors of the  two  methods, ignoring any error constants.  In addition, we make the approximation that $\D t_{n,M} \approx \frac{\D t_n}{M}$.  Any inaccuracies due to these approximations will be  canceled by rounding up to the nearest integer.  Equating  $E_{FEM}$ and $E_{SDC}$ we have,
		\begin{align}
				\left(\frac{\D  t_n}{M}\right)^{q+1} = \D t^{\min\{K,M\}}_n.  \label{eq:equate:err}
		\end{align}
		We solve \eqref{eq:equate:err} for $q$, then round up to the nearest integer.  This leads to,
		\begin{align}
			q = \ceil[\Bigg]{\frac{\min\{K,M\} \ln(\D t_n)}{\ln(\D t_n) - \ln(M)} - 1}.  \label{eq:FEM:q}
		\end{align}

		It should be noted that using \eqref{eq:FEM:q} to determine the order of the finite element method does not guarrantee $q+1$ order convergence in an $L^2$ sense.  This calculation is only done to attempt to have the finite element solution agree with the SDC accuracy over the entire domain, as opposed to only at the nodes.

	\subsection{Implementing a cG($q$) Finite Element Approximation}
		Since we now know what the order of our finite element approximation must be to obtain consistent accuracy with the SDC method, we next discuss how to obtain a cG($q$) finite element approximation using \eqref{eq:equiv:FEM} for $q > 1$.  
		
		Assume that we have obtained a solution $Y^K_{n,m}$ using $K$ iterations of the SDC method.  Note that the SDC approximation gives all the information needed to obtain the solution to \eqref{eq:equiv:FEM} using a cG(1) scheme.  To calculate a solution $Y^K(t)$ to \eqref{eq:equiv:FEM} using a cG($q$) scheme where $q > 1$, then $q-1$ additional values are needed within each subinterval $\Im$ at nodes denoted by, 
		 
		\[ \tm = \t_0  < \t_1 < \ldots < \t_{q-1} < \t_q = \tin{m+1}.\]
		
		We know the values $Y^K(\t_0)$ and $Y^K(\t_q)$ through the SDC approximation.  To determine the intermediate nodes, $\t_1, \ldots, \t_{q-1}$, we use different test functions in \eqref{eq:equiv:FEM}.  In Theorem \ref{thm:equiv} we used the test function $v_m = 1$ to show equivalency.  We obtain  $q-1$  equations  to determine the intermediate values of $Y^K(t)$  using the Lagrange  basis functions of degree $q-1$, $\{\ell_i^{q-1}(t)\}_{i=0}^{q-2} \subset \Pcal^{q-1}(\Im)$ centered at $q$ equally  spaced nodes within $\Im$ for the test functions.  For this to work, the set $\{\ell_i^{q-1}(t)\}_{i=0}^{q-2}$ must be linearly independent from the test function $v(t) =1$  used to determine equivalency.  
		
		\begin{lem}
			Let $\{\ell_i^{q-1}\}_{i=0}^{q-1} \subset \Pcal^{q-1}(\Im)$ denote the Lagrange basis functions based on $q$ uniformly spaced nodes in the interval $\Im$ including  the boundary.  Then the set of polynomials $S = \{\ell_i^{q-1}\}_{i=0}^{q-2} \cup \{1\}$ are linearly independent.
		\end{lem}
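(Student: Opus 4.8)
The plan is to exploit the two defining features of a Lagrange basis built on a fixed node set: the cardinal (Kronecker delta) property at those nodes, and the fact that these $q$ functions already constitute a basis of the $q$-dimensional space $\Pcal^{q-1}(\Im)$. Let $x_0 < x_1 < \cdots < x_{q-1}$ denote the $q$ uniformly spaced nodes in $\Im$, so that $\ell_i^{q-1}(x_j) = \delta_{ij}$. Since $S$ contains exactly $q$ functions and $\dim \Pcal^{q-1}(\Im) = q$, it suffices to establish linear independence (equivalently, that $S$ is a basis).

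To do this I would suppose a vanishing linear combination
\[
\sum_{i=0}^{q-2} c_i \, \ell_i^{q-1}(t) + c_{q-1} \cdot 1 = 0 \qquad \text{for all } t \in \Im,
\]
and evaluate it at the nodes. Evaluating at the single node $x_{q-1}$, whose cardinal function has been omitted from $S$, annihilates every term $\ell_i^{q-1}(x_{q-1}) = \delta_{i,q-1} = 0$ for $i \leq q-2$, leaving $c_{q-1} = 0$. Substituting this back and evaluating at each remaining node $x_j$ with $0 \leq j \leq q-2$ isolates $c_j = 0$ by the Kronecker property. Hence all coefficients vanish and $S$ is linearly independent.

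An equivalent route is to invoke the partition-of-unity identity $\sum_{i=0}^{q-1} \ell_i^{q-1} = 1$, which holds because Lagrange interpolation reproduces constants exactly. Writing $1 = \sum_{i=0}^{q-1}\ell_i^{q-1}$ shows that the change of basis replacing $\ell_{q-1}^{q-1}$ by the constant $1$ has a transition matrix that is triangular with unit diagonal, hence invertible; consequently $S$ is again a basis of $\Pcal^{q-1}(\Im)$.

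There is no genuine obstacle here: the statement is elementary linear algebra once the cardinal property is in hand. The only point requiring a moment's care is the ordering of the evaluations in the first argument: one must test against $x_{q-1}$ first in order to decouple $c_{q-1}$, precisely because $x_{q-1}$ is the node whose Lagrange function was dropped, so every remaining $\ell_i^{q-1}$ vanishes there. Testing in a naive order would entangle the coefficient of the constant with the others. Everything else follows directly from $\ell_i^{q-1}(x_j) = \delta_{ij}$.
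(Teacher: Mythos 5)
Your proof is correct and takes essentially the same route as the paper: both evaluate the putative vanishing combination at the right endpoint (the node whose cardinal function was dropped) to force the constant's coefficient to vanish, then conclude from the independence of the remaining Lagrange functions. Your direct nodal evaluation and the partition-of-unity remark are just slightly more explicit versions of the paper's contradiction argument.
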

		
		\begin{proof}
			For the sake of contradiction, assume the set $S$ is linearly dependent.  Then there exists $c_i$  not all zero such that the polynomial
			\[ p(t) = c_0 \ell_0^{q-1}(t) + \ldots +  c_{q-2} \ell_{q-2}^{q-1}(t)	+ c_{q-1} \equiv 0. \]
			Evaluating $p(t)$ at $\tin{m+1}$ we obtain,
			\[ p(\tin{m+1}) = c_{q-1} = 0. \]
			This implies,
			\[ p(t) = c_0 \ell_0^{q-1}(t) + \ldots +  c_{q-2} \ell_{q-2}^{q-1}(t) \equiv 0,\]
			which implies the Lagrange basis functions are linearly dependent, which is a contradiction.
		\end{proof}

		Let $\{\ell^q_i(t)\}_{i=0}^q$ be the degree $q$  Lagrange basis polynomials on $\Im$ centered at  the nodes $\{ \t_i \}_{i=0}^{q}$.  Then we can write our finite element approximation as,
		\begin{align}
			Y^K(t) = \sum_{i=0}^q Y^K_i \ell^q_i(t), \label{eq:FEM:Yk}
		\end{align}
		where $Y^K_i = Y^K(\t_i)$.  We  know $Y^K_0$ and $Y^K_q$ from the SDC approximation.  Inserting the test functions $\{\ell_i^{q-1}(t)\}_{i=0}^{q-2}$ into \eqref{eq:equiv:FEM} 
		\color{black}
		when $R = LHR$
		\color{black}
		, we obtain the system of equations,
		\begin{align}
			\int_{\tm}^{\tin{m+1}} \dot{Y}^{K}(t)\ell_i^{q-1}(t) \,dt &= \left( \int_{\tm}^{\tin{m+1}} (f(Y^{K}(t),t) - f(Y^{K-1}(t),t))	\ell_i^{q-1}(t) \,dt \right)_{LHR} \\
		&  + \int_{\tm}^{\tin{m+1}} (\Sm f^{K-1}) \ell_i^{q-1}(t) \,dt \qquad i = 0, \ldots, q-2 \notag 
		\end{align}
		Now using \eqref{eq:FEM:Yk}, we obtain,
		\begin{align}
			\sum_{j=0}^q Y_j^{K} \int_{\tm}^{\tin{m+1}} \dot{\ell}_j^q(t) \ell_i^{q-1}(t) \,dt &= \D \tm [f(Y^{K}_0,\tm) - f(Y^{K-1}_0,\tm)] \ell_i^{q-1}(\tm) \\
		 &   + \sum_{j=0}^{M-1} f(Y^{K-1}(t_j), t_j) \int_{\tm}^{\tin{m+1}} L_j^m(t) \ell_i^{q-1}(t) \,dt \notag \\
		 &  \qquad i = 0, \ldots, q-2, \notag
			\end{align}
		where the functions $L_j(t)$ are the Lagrange basis functions through the subnodes $\tm$.  Rearranging, we can write this as, 
		\begin{align}
			A \mbfY^{K} = \mbfb_E + B \mbfF^{K-1} - \mbfb_{SDC} \label{eq:middle:nodes}
		\end{align}
		where $\mbfY^{K} = [Y^{K}_1, \ldots, Y^{K}_{q-1}]^T$, $\mbfF^{K-1} = [f(Y^{K-1}(t_{n,0}), t_{n,0}), \ldots, f(Y^{K-1}(t_{n,M}), t_{n,M})]^T$ is $f$ evaluated at the $\Im$ subnodes and,
		\begin{align}
			A_{ij} &= \int_{\tm}^{\tin{m+1}} \dot{\ell}_j^q(t) \ell_{i-1}^{q-1}(t)\,dt \qquad i,j = 1,\ldots, q-1 \\
			B_{ij} &= \int_{\tm}^{\tin{m+1}} L_j^m(t) \ell_{i-1}^{q-1}(t)\,dt \qquad i = 1,\ldots,q-1 \;\; j = 0,\ldots,M\\
			\mbfb_E &= [\D \tm (f(Y^{K}_0,\tm) - f(Y^{K-1}_0,\tm)), 0, \ldots, 0]^T \\
			\mbfb_{SDC,i} &= Y_0^{K} \int_{\tm}^{\tin{m+1}} \dot{\ell}_0^q(t) \ell_i^{q-1}(t) \,dt + Y_q^{K} \int_{\tm}^{\tin{m+1}} \dot{\ell}_q^q(t) \ell_i^{q-1}(t) \,dt \\
			&  \qquad i = 1, \ldots, q-1.  \notag
		\end{align}
		  Solving \eqref{eq:middle:nodes} for the intermediate nodes gives the coefficients $Y_i^K$ in \eqref{eq:FEM:Yk} and hence the cG($q$) finite element solution $Y^K(t)$ to \eqref{eq:equiv:FEM}
		  \color{black}
		   when $R = LHR$.

		  For the implicit case corresponding to $R = RHR$, the only change is that the $\mbfb_E$ is zero. This is because the Langrange basis functions $\{\ell_i^{q-1}(t)\}_{i=0}^{q-2}$ are zero at the right end of $I_{n,m}$.
		  \color{black}

\section{\textit{A posteriori} Error Analysis} \label{sec:err}
	In this section, we derive an a posteriori error representation formula using variational analysis, adjoint operators and computable residuals.  In particular, we develop a formula for the error in  a QoI represented by \eqref{eq:qoi_definition}.
		For the rest of this section, $Y^K$ represents the final iterate of SDC equivalent finite element method \eqref{eq:equiv:FEM}, and $y$ represents the exact solution to \eqref{eq:ODE}. Moreover,  $e = y-Y^K$ represents the error in $Y^K$.  
	
	\subsection{Adjoint Problem}
	We define the adjoint problem as follows:
	\begin{align}
		-\dot{\f} &= \Hop^T \f + \psi, \hspace{.2in} T> t \geq 0 \label{eq:adj:ode}\\
		\f(T) &= \psi_T, \notag	
	\end{align}
	where 
	\begin{align}
	\label{eq:linearization}
		\Hop = \int_0^1 \pdd{f(z)}{y} \, ds,	
	\end{align}
	for $z = sy + (1-s)Y^K$.  This operator has the property, by the chain rule, that $\Hop(y-Y^K) = f(y) - f(Y^K)$.  Note that the adjoint problem is solved backwards from $T$ to $0$, with the initial condition given at the time $t=T$.

	\subsection{Error Representation Formula}
	We give two error representation formulas for $Q(y-Y^K)$.  The first is well known, but is given for completeness.
	\begin{thm} \label{thm:basic:err}
		  The error in the quantity of interest, $Q(e)$, is given by,
		\begin{align}
			\<e,\psi \>_{[0,T]} + (e(T), \psi_T) &= \< f(Y^K,t) - \dot{Y}^K, \f \>_{[0,T]}.	
		\end{align}

	\end{thm}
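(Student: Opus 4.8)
The plan is to carry out the standard adjoint (duality) argument: use the adjoint equation \eqref{eq:adj:ode} to trade the QoI data $\psi$, $\psi_T$ for the adjoint solution $\f$, then integrate by parts in time and invoke the exact linearization property of $\Hop$ to replace the nonlinearity by a computable residual.

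First I would start from $Q(e) = \<e,\psi\>_{[0,T]} + (e(T),\psi_T)$ and substitute $\psi = -\dot{\f} - \Hop^T\f$ from \eqref{eq:adj:ode}, so that the volume term splits as $-\<e,\dot{\f}\>_{[0,T]} - \<e,\Hop^T\f\>_{[0,T]}$. For the second of these I would transpose $\Hop$ back onto $e$ and use the identity $\Hop(y - Y^K) = f(y) - f(Y^K)$ noted after \eqref{eq:linearization}, giving $\<e,\Hop^T\f\>_{[0,T]} = \<f(y)-f(Y^K),\f\>_{[0,T]}$. This is the conceptual crux: it converts the nonlinear discrepancy into an exact (no linearization error) quantity, which is precisely what makes the representation valid for nonlinear $f$.

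Next I would integrate the term $-\<e,\dot{\f}\>_{[0,T]}$ by parts in time, producing $-(e(T),\f(T)) + (e(0),\f(0)) + \<\dot{e},\f\>_{[0,T]}$. The boundary terms collapse: $\f(T)=\psi_T$ makes $-(e(T),\f(T))$ cancel the $+(e(T),\psi_T)$ carried from the QoI, while $e(0) = y_0 - Y^K(0) = 0$ since the finite element method \eqref{eq:equiv:FEM} imposes $Y^K(0)=y_0$. What remains is $Q(e) = \<\dot{e},\f\>_{[0,T]} - \<f(y)-f(Y^K),\f\>_{[0,T]}$.

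Finally I would insert $\dot{e} = \dot{y} - \dot{Y}^K = f(y) - \dot{Y}^K$; the two $f(y)$ contributions cancel between $\<\dot{e},\f\>$ and $-\<f(y)-f(Y^K),\f\>$, leaving exactly $\<f(Y^K,t)-\dot{Y}^K,\f\>_{[0,T]}$, as claimed. The computation is otherwise routine, so I do not expect a genuine obstacle; the only points requiring care are the sign bookkeeping in the integration by parts and the simultaneous cancellation of the two boundary contributions, together with the tacit regularity assumptions (that $\f$ is absolutely continuous and that $\dot{Y}^K$ is defined piecewise) needed to justify integrating by parts.
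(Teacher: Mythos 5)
Your proposal is correct and follows essentially the same route as the paper: substitute the adjoint equation for $\psi$, integrate by parts in time (using $\f(T)=\psi_T$ and $e(0)=0$ to handle the boundary terms), apply the linearization identity $\Hop e = f(y)-f(Y^K)$, and cancel via $\dot y = f(y)$. The paper simply compresses these steps into one chain of equalities, so there is nothing further to reconcile.
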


	\begin{proof}
		We begin by substituting the adjoint ODE \eqref{eq:adj:ode} for $\psi$ in the error of the quantity of interest.
		
		\begin{align}
			\< e , \psi \>_{[0,T]} &= \<e, -\dot{\f} - \Hop^T \f \>_{[0,T]} \\
			&= 	\<\dot{e} - \Hop e, \f \>_{[0,T]} - (e(T), \psi_T) \notag \\
			&= \< \dot{y} - \dot{Y}^K - f(y,t) + f(Y^K,t), \f \>_{[0,T]} - (e(T), \psi_T) \notag \\
			&= \< f(Y^K,t) - \dot{Y}^K, \f \>_{[0,T]} - (e(T), \psi_T). \notag
		\end{align}
		Rearranging, we obtain the above error representation formula.

	\end{proof}

	Next, we consider a different error representation formula, which aims to separate the error into the various components attributed to different parts of the SDC method.

\color{black}

	\begin{thm} 
	\label{thm:err_contributions}

	\begin{align}
			\<e,\psi \>_{[0,T]} + (e(T), \psi_T) &=  E_{D} + E_M + E_K	. \label{eq:component:err_alt}
		\end{align}
		The error components are given by,
		\begin{align}
			E_{D} &= \sum_{n=0}^{N-1} \bigg\{ \sum_{m=0}^{M-1} \Big[  \< \Sm f^{K-1} - \dot{Y}^K, \f  - \pinm \f \>_{\Im} \\
			& \hspace{.75in}+  \<f(Y^{K},t) - f(Y^{K-1},t),   \f - \pinm \f\>_{R,\Im}\Big] \bigg\} \\
			E_M &= \sum_{n=0}^{N-1} \bigg\{ \sum_{m=0}^{M-1} \Big[ \<f(Y^K,t) - \Sm f^K, \f \>_{\Im} \Big] \bigg\}  \\
			E_K &=  \sum_{n=0}^{N-1} \bigg\{ \sum_{m=0}^{M-1} \Big[ \<f(Y^{K-1},t) - f(Y^K,t), \f\>_{R,\Im}  \\
			  & \hspace{.75in}+  \< \Sm f^{K} - \Sm f^{K-1}, \f \>_{\Im} \Big] \bigg\} \notag
		\end{align}

where $\pinm: C^1(\Im) \rightarrow \Pcal^{q-1}(\Im)$ is a nodal projection operator. 

	\end{thm}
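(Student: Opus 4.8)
The plan is to start from the basic error representation of Theorem~\ref{thm:basic:err}, namely $Q(e) = \< f(Y^K,t) - \dot{Y}^K, \f \>_{[0,T]}$, and to show that the proposed sum $E_D + E_M + E_K$ collapses to exactly this expression. First I would localize the global pairing by writing
\begin{align*}
\< f(Y^K,t) - \dot{Y}^K, \f \>_{[0,T]} = \sum_{n=0}^{N-1}\sum_{m=0}^{M-1} \< f(Y^K,t) - \dot{Y}^K, \f \>_{\Im},
\end{align*}
so that on each subinterval $\Im$ the nodal projection $\pinm \f \in \Pcal^{q-1}(\Im)$ is an admissible test function in the equivalent finite element relation \eqref{eq:equiv:FEM}. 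The strategy is then to verify the identity subinterval by subinterval: expand the corresponding summands of $E_D$, $E_M$, and $E_K$, separate every inner product involving the adjoint into a part paired against $\f - \pinm\f$ and a part paired against $\pinm\f$, and show that the $\pinm\f$ contributions vanish while the remainder reduces to $\< f(Y^K,t) - \dot{Y}^K, \f \>_{\Im}$.

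The central step is the cancellation of the projected contributions. Setting the test function $v_m = \pinm\f$ in \eqref{eq:equiv:FEM} at iterate index $K$ yields
\begin{align*}
\< \dot{Y}^{K}, \pinm\f \>_{\Im} = \< f(Y^{K},t) - f(Y^{K-1},t),\pinm\f \>_{R,\Im} + \< \Sm f^{K-1}, \pinm\f \>_{\Im}.
\end{align*}
Collecting from $E_D$ every term paired against $\pinm\f$ --- these are $-\<\Sm f^{K-1} - \dot{Y}^K, \pinm\f\>_{\Im}$ from the first summand and $-\< f(Y^K,t) - f(Y^{K-1},t), \pinm\f\>_{R,\Im}$ from the second --- I would observe that their sum is exactly the difference of the two sides of this relation, hence zero. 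This is the discrete analogue of Galerkin orthogonality and is the only place where the defining equation of the method enters.

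Once the projected terms are gone, the remaining inner products are all paired against the full adjoint $\f$, and the argument is pure bookkeeping. The quadrature contribution $\< f(Y^{K},t) - f(Y^{K-1},t), \f\>_{R,\Im}$ surviving in $E_D$ is the negative of the term $\< f(Y^{K-1},t) - f(Y^K,t), \f\>_{R,\Im}$ in $E_K$, so these cancel; the interpolation terms $\<\Sm f^{K-1}, \f\>_{\Im}$, $-\<\Sm f^{K}, \f\>_{\Im}$, and $\<\Sm f^{K} - \Sm f^{K-1}, \f\>_{\Im}$ drawn from $E_D$, $E_M$, and $E_K$ sum to zero; and what remains is precisely $\< f(Y^K,t), \f\>_{\Im}$ from $E_M$ together with $-\<\dot{Y}^K, \f\>_{\Im}$ from $E_D$, that is, $\< f(Y^K,t) - \dot{Y}^K, \f\>_{\Im}$. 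Summing over $n$ and $m$ recovers Theorem~\ref{thm:basic:err} and completes the argument.

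The main obstacle is organizational rather than conceptual: one must track scrupulously which pairings are the exact $L^2$ inner product $\<\cdot,\cdot\>_{\Im}$ and which are the quadrature pairing $\<\cdot,\cdot\>_{R,\Im}$, because the vanishing of the projected terms hinges on \eqref{eq:equiv:FEM} combining these two pairings in precisely the right proportions. It is worth emphasizing that the decomposition is an exact algebraic identity --- no approximation or smallness is used --- and that the only property of $\pinm$ invoked is that its range lies in the test space $\Pcal^{q-1}(\Im)$, so that $\pinm\f$ may legitimately be substituted into \eqref{eq:equiv:FEM}.
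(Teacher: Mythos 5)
Your proposal is correct and follows essentially the same route as the paper: both arguments rest on Theorem~\ref{thm:basic:err}, on substituting the test function $\pinm\f \in \Pcal^{q-1}(\Im)$ into \eqref{eq:equiv:FEM} to obtain the Galerkin-orthogonality cancellation, and on the same add-and-subtract bookkeeping of the $\Sm f^{K-1}$, $\Sm f^K$, and quadrature terms. The only difference is direction --- you expand $E_D+E_M+E_K$ and collapse it back to the basic representation, whereas the paper builds the decomposition up from it --- which is the identical algebra read in reverse.
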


	\begin{proof}
		Let $E = \<e,\psi \>_{[0,T]} + (e(T), \psi_T)$.
	 	Starting with the results from Theorem \ref{thm:basic:err}, we have that 
		\begin{align}
			E &= \sum_{n=0}^{N-1} \sum_{m=0}^{M-1}\< f(Y^K,t) - \dot{Y}^K, \f  \>_{\Im}.
		\end{align}
		By adding and subtracting  
		$\< \Sm f^{K-1} , \f  \>_{\Im} $, $ \< \Sm f^K, \f \>_{\Im}$, and $\<f(Y^{K-1},t) - f(Y^K,t), \f\> $	
		inside the double sum and re-arranging we arrive at
		\begin{align}
		E &= \sum_{n=0}^{N-1} \bigg\{ \sum_{m=0}^{M-1} \Big[  \< \Sm f^{K-1} - \dot{Y}^K, \f   \>_{\Im} \\
		& \hspace{.75in}+ \<f(Y^{K},t) - f(Y^{K-1},t),   \f \>_{R,\Im}\Big] \bigg\}  + E_M + E_K.  \label{eq:partial_err_rep}
		\end{align}

		Replacing $v_m$ by $\pinm \f \in \Pcal^{q-1}(\Im)$ in \eqref{eq:equiv:FEM},
		\begin{equation}
		\label{eq:gal_orthog}
		\begin{aligned}
		&\< \dot{Y}^{K}, \pinm \f \>_{\Im} - \< f(Y^{K},t) - f(Y^{K-1},t),\pinm \f \>_{R,\Im} \\
		& \hspace{.75in}- \< \Sm f^{K-1}, \pinm \f \>_{\Im}  = 0.
		\end{aligned}
		\end{equation}
		We combine \eqref{eq:partial_err_rep} and \eqref{eq:gal_orthog} which completes the proof.

	\end{proof}

\color{black}

By examining the three terms of the error representation formula \eqref{eq:component:err_alt}, we see that the terms correspond to the three parameters of the SDC method.  The first term $E_{D}$ represents the discretization error due to the SDC method. Note this term is simply the residual of the equivalent FEM method \eqref{eq:equiv:FEM} weighted by the adjoint solution.  
The term $E_M$ compares the approximation to an interpolation of the approximation through the subnodes $\tm$.  As the number of subnodes $M$ increases, the interpolant should converge to the true function, and this term should decrease.  Finally, the term $E_K$ considers the difference between two succesive iterations.  As the iterations converge, this term should decrease.  These terms can be used to fine tune all three parameters $\D t$, $M$, and $K$, depending on which term in the error representation formula is largest.

\subsection{Error Estimate and its accuracy}
\label{sec:adj_effec_rat}
The adjoint solution in Theorem \ref{thm:err_contributions} also needs to be approximated numerically. We  use the SDC finite element equivalent method to solve the adjoint equation~\eqref{eq:adj:ode}, where the order of the method is determined by equation \eqref{eq:FEM:q}.  Often it is desired that the adjoint equation is solved to a higher accuracy than the forward equation.  This may be achieved  in multiple ways when using the SDC method, e.g. by using extra iterations of the method (larger $K$), refining the discretization  (smaller $\Delta t_n$)  or a larger number of subintervals  (larger $M$) relative to the parameters used to obtain the SDC solution given by \eqref{eq:SDC_explicit} 
\color{black}
or \eqref{eq:SDC_implicit}.  
\color{black}
In this article we 
solve the adjoint problem on a temporal grid  with twice as many intervals than those used for the solution of the forward problem.

Additionally, the linearization \eqref{eq:linearization} requires the unknown true solution. In practice, we create an approximate adjoint problem by linearizing around a computed numerical solution and then compute the adjoint solution numerically. Hence, we obtain an approximate error estimate from the exact  error representation. This is well established in the literature~\cite{estep2, Fish:2009,Estep:Larson:00} and produces robust accuracy. This approach is employed for all of the adjoint problems discussed in this paper. 

The numerical approximation of the adjoint solution and the effect of linearization leads to an error estimate from the error representation~\eqref{eq:component:err_alt}.
For the sake of simplicity, we do not write approximate versions of the exact error representations.
To measure how well the error formula estimates the true error, we use the effectivity ratio
\[ \Ecal = \frac{\text{Exact Error}}{\text{Error Estimate}}. \]
An effectivity ratio close to one indicates that the error estimate is accurate.

\subsection{Algorithm for Error Estimation}
The full algorithm for \emph{a posteriori} error estimation for the SDC method is provided in Algorithm \ref{alg:error_estimation}.
\begin{algorithm}[H]
\caption{Adjoint-based \emph{a posteriori} error estimation for SDC method}
\label{alg:error_estimation}
\begin{algorithmic}
    \State Solve primal problem at subnodes $\tm$   \hfill (see \eqref{eq:SDC_explicit} or \eqref{eq:SDC_implicit})
    \State For $q >1$, compute FEM solution \hfill (see \eqref{eq:middle:nodes})
    \State Approximate solution of adjoint problem     \hfill (see \eqref{eq:adj:ode})
   	\State Estimate error and its contributions  \hfill (see Theorem~\ref{thm:err_contributions})
\end{algorithmic}
\end{algorithm}

\section{Numerical Experiments} \label{sec:results}
	We now demonstrate the accuracy of the error estimate derived in Theorem~\ref{thm:err_contributions} numerically for a variety of linear and nonlinear problems as various parameters of the SDC method are changed. The first experiment, in \S \ref{sec:harmonoc_osc}, is on a linear second order ODE modeling a harmonic oscillator. The second experiment, in \S \ref{sec:vinograd},  is the Vinograd ODE which is an example of an non-autonomous system of ODEs. The third experiment, in \S \ref{sec:two_body}, concerns the nonlinear two-body problem.
	 We also illustrate the importance of using the appropriate order for the equivalent FEM in this example.
	 \color{black}
	 These first three problems are solved using the explicit SDC method in \eqref{eq:SDC_explicit}.
	 \color{black}
	Finally, we examine  the accuracy of the error estimate and its different contributions for the discretization of a parabolic partial differential equation in \S \ref{sec:heat_pde}. 
	\color{black}
	This is example of a stiff problem and is solved using the implicit SDC method in \eqref{eq:SDC_implicit}.
	\color{black}

	The numerical examples also illustrate the utility of the estimate \eqref{eq:component:err_alt} to distinguish the various contributions of the error. To keep the notation clear, we label each of these parameters for the solution of the forward problem (\eqref{eq:SDC_explicit} or \eqref{eq:SDC_implicit}) here.  The number of intervals is denoted by $N$, the (uniform) time-step $T/N$ is denoted by $\Delta t$, the  number of subintervals within each interval is denoted by $M$, while the total number of iterations $K$. 
	We expect all contributions of error to decrease as $\Delta t$ is decreased as a smaller $\Delta t$ should not only decrease the discretization contribution $E_D$ but also $E_M$ (since the interpolation is now over a smaller interval) and $E_K$ (since the iterations need to converge over a smaller interval). 
	Increasing $M$ should effect  $E_M$ but should not have a significant effect on the other two contributions. Similarly, by increasing $K$ we expect $E_K$  to decrease while there should be a less pronounced effect on $E_M$ and $E_D$.
	\color{black}
	The \textit{a posteriori} error estimate and error contributions may also be used for error control as we demonstrate in \S \ref{sec:err_cont_har_osc} for the case of the harmonic oscillator. We abbreviate the Error Estimate (see \S \ref{sec:adj_effec_rat}) as Err. Est.  in the tables indicating the numerical results throughout this section.
	\color{black}

\subsection{Linear Harmonic Oscillator}
\label{sec:harmonoc_osc}
		 The first ODE we consider is the linear harmonic oscillator,
		 \begin{align}
		 	m \ddh{x}{t}{2} + c \dd{x}{t} + k x = F_0 \cos(\omega t + \f_d),	
		 \end{align}
		 where $m$ is the mass of the object, $c$ and $k$ are the damping and stiffness coefficients respectively, $F_0 \cos(\omega t + \f_d)$ is an oscillatory forcing term, and $x(t)$  represents the position of the oscillating mass. These parameters are set as $m = 0.5$, $c = 1.0$, $k = 1.0$, $F_0 = 10$, $\omega = 20$ and $\f_d = 0$. The initial conditions are $x(0) = 0$ and $x'(0) = 1$.  
		 \color{black}
		 This numerical example was also explored in \cite{CET+2016} with regards to the error in the Parareal algorithm \cite{Maday2002387}.
		 \color{black}
		 This second order ODE is converted to a first order system  $\dot{y}+Ay=g(t)$ in the usual manner by introducing variables $y_1 = x$ and $y_2 = dx/dt$.
Rewriting as a system of first-order ODEs,  $\dot{y}+Ay=h(t)$, gives
\begin{equation*}
\begin{pmatrix}
\dot{y_1}(t) \\  \dot{y_2}(t)
\end{pmatrix}
+
\begin{pmatrix}
0    & -1 \\  k/m  & c/m
\end{pmatrix}
\begin{pmatrix}
y_1(t) \\ y_2(t)
\end{pmatrix}
=
\begin{pmatrix}
0 \\  F_0 \cos(\omega t + \f_d)
\end{pmatrix}.
\end{equation*}
The initial conditions for the first-order system  are $[y_1(0), y_2(0)]^T = [0, 1]^T$. The QoI is specified by setting $\psi = [1 \quad1]^T$ and $\psi_T = [1, 0]^T$. The final time for the simulation is chosen as $T=5$.

The results for the estimated error, effectivity ratio and the various contributions to the error are shown in Tables~ \ref{tab:problem_Linear Harmonic Oscillator_R_vary_J_2_M_3}, \ref{tab:problem_Linear Harmonic Oscillator_R_40_J_vary_M_4} and \ref{tab:problem_Linear Harmonic Oscillator_R_40_J_4_M_vary}. In all our results, we see that the error estimate is accurate, indicated by the value of the effectivity $\Ecal$ being close to $1.0$. 

Table~\ref{tab:problem_Linear Harmonic Oscillator_R_vary_J_2_M_3} examines the error behavior as $\Delta t$ is decreased. As $\Delta t$ decreases, so does the error and its different components. The quantity $E_D$, which captures the error due to SDC discretization, is expected. The quantity $E_M$ decreases as now each sub-interval has smaller length. The quantity $E_K$ decreases because now the interval size is smaller and iterations converge faster.
\color{black}
Since $K=2$ we expect the component $E_D$ to decrease as $O(\D t^{2})$. This is indeed the case as seen from the values in Table~\ref{tab:problem_Linear Harmonic Oscillator_R_vary_J_2_M_3}. The cancellation of error among different terms means that the total decrease in the error is much greater than $O(\D t^{2})$.
\color{black}

Table \ref{tab:problem_Linear Harmonic Oscillator_R_40_J_vary_M_4} examines the error as the number of iterations $K$ is increased. The error initially decreases and then stagnates. This stagnation is explained by observing the different error contributions. Increasing $K$ initially decreases all components of error. However, around
the fourth iteration, $E_M$ 
and $E_D$
stop decreasing and become the dominant contributions to the error. Iterating further decreases $E_K$ but not $E_M$ 
or $E_D$
and hence the total error does not decrease either.

Table \ref{tab:problem_Linear Harmonic Oscillator_R_40_J_4_M_vary} examines what happens when the number of subintervals $M$ is varied. Increasing $M$ decreases  $E_M$ as expected. Increasing $M$ also decreases $E_D$ initially, but this contribution stops decreasing as $M$ increases. Moreover, increasing $M$ does not have a significant effect on $E_K$, which becomes the dominant contribution to the error as expected.

\begin{table}[!ht]
\centering
\begin{tabular}{c|c|c|c|c|c}
\toprule
$dT$ & Err. Est.  & $\Ecal$ &  $E_{D}$ &  $E_M$ & $E_K$   \\
\midrule
0.5000&	 1.23E-01&		  1.00&	 -2.00E-01&	 2.42E-01&	 8.15E-02 \\
0.2500&	 2.22E-03&		  0.99&	 -3.15E-02&	 3.06E-02&	 3.11E-03 \\
0.1250&	 -1.10E-04&	  1.01&	 -7.14E-03&	 6.94E-03&	 8.96E-05 \\
0.0625&	 -8.73E-05&	  1.00&	 -1.74E-03&	 1.70E-03&	 -4.56E-05 \\
\bottomrule
\end{tabular}
\caption{Results for the Linear Harmonic Oscillator problem in \S~\ref{sec:harmonoc_osc} as $\Delta t$ is varied. Here $M = 3, K = 2$.}
\label{tab:problem_Linear Harmonic Oscillator_R_vary_J_2_M_3}
\end{table}

\begin{table}[!ht]
\centering
\begin{tabular}{c|c|c|c|c|c}
\toprule
$K$ & Err. Est.  & $\Ecal$ &  $E_{D}$ &  $E_M$ & $E_K$   \\
\midrule
2&	 -1.42E-04&	  1.00&	 -4.36E-03&	 4.24E-03&	 -1.49E-05\\
3&	 -9.75E-05&	  1.00&	 -4.32E-03&	 4.24E-03&	 -1.14E-05\\
4&	 3.85E-07&		  1.00&	 -2.05E-07&	 2.05E-07&	 3.86E-07\\
5&	 -1.47E-08&	  1.00&	 -2.20E-07&	 2.09E-07&	 -4.14E-09\\
6&	 -1.01E-08&	  1.00&	 -2.19E-07&	 2.10E-07&	 -1.62E-10\\
7&	 -9.90E-09&	  1.00&	 -2.19E-07&	 2.10E-07&	 8.29E-12\\
8&	 -9.91E-09&	  1.00&	 -2.19E-07&	 2.10E-07&	 -1.26E-13\\
\bottomrule
\end{tabular}
\caption{Results for the Linear Harmonic Oscillator problem in \S~\ref{sec:harmonoc_osc} as the number of iterations $K$ is varied. Here $M = 4, \Delta t = 0.12$.}
\label{tab:problem_Linear Harmonic Oscillator_R_40_J_vary_M_4}
\end{table}

\begin{table}[!ht]
\centering
\begin{tabular}{c|c|c|c|c|c}
\toprule
$M$ & Err. Est.  & $\Ecal$ &  $E_{D}$ &  $E_M$ & $E_K$   \\
\midrule
2&	 2.96E-02&	  1.01&	 -1.03E-02&	 1.24E-02&	 2.75E-02\\
3&	 2.43E-02&	  1.01&	 -4.60E-03&	 7.15E-03&	 2.17E-02\\
4&	 2.02E-02&	  1.00&	 -2.23E-03&	 4.33E-03&	 1.81E-02\\
5&	 1.72E-02&	  1.00&	 -1.12E-03&	 2.90E-03&	 1.55E-02\\
6&	 1.50E-02&	  1.00&	 -5.37E-04&	 2.07E-03&	 1.35E-02\\
7&	 1.33E-02&	  1.00&	 -2.05E-04&	 1.56E-03&	 1.19E-02\\
8&	 1.19E-02&	  1.00&	 -6.91E-06&	 1.21E-03&	 1.07E-02\\
9&	 1.08E-02&	  1.00&	 1.16E-04&	 9.69E-04&	 9.68E-03\\
10&	 9.83E-03&	  1.00&	 1.95E-04&	 7.93E-04&	 8.84E-03\\
11&	 9.04E-03&	  1.00&	 2.45E-04&	 6.60E-04&	 8.14E-03\\
\bottomrule
\end{tabular}
\caption{Results for the Linear Harmonic Oscillator problem in \S~\ref{sec:harmonoc_osc} as the number of sub-intervals $M$ is varied. Here $K = 1, \Delta t = 0.12$.}
\label{tab:problem_Linear Harmonic Oscillator_R_40_J_4_M_vary}
\end{table}
 
\color{black}
\subsubsection{Error Control}
\label{sec:err_cont_har_osc}
The error estimate and the different contributions to the error may be used for error control. The aim of error control is to adapt numerical parameters to achieve a certain tolerance, TOL, for the error. If the Estimated Error is greater than TOL then we have the option of either decreasing $\Delta t$ or increasing $M$ or $K$ to lower the error. In this example, a simple metric  is chosen for choosing the next set of numerical parameters. If $E_{D}$ is the dominant component then $\Delta t$ is decreased by half, if $E_M$ is the dominant contribution then $M$ is increased by one, and if $E_K$ is the dominant contribution then $K$ is increased by one. The results of applying this strategy to the Linear Harmonic Oscillator problem are shown in Table~\ref{tab:problem_Linear Harmonic Oscillator_error_control_pnum_13} where we chose TOL$=1E-04$.

Initially, the error is dominated by $E_K$, and so $K$ is increased by one. Then the dominant contribution if $E_M$ for the next couple of steps and hence $M$ is increased. After this, $E_D$ is the dominant contribution and hence $\Delta t$ is reduced by a factor of two till the error reaches the desired tolerance of $1E-04$.

\begin{table}[!ht]
\centering
{\color{black} 
\begin{tabular}{c|c|c|c|c|c|c}
\toprule
 Err. Est. & $dT$ & $M$  & $K$ & $E_{D}$ &  $E_M$ & $E_K$       \\
\midrule
-2.04E+00 & 	 0.5000& 	 2 &	 1 &	  8.57E-02 &	 -1.00E+00 &	 -1.13E+00\\
-1.01E+00 & 	 0.5000& 	 2 &	 2 &	  -2.70E-01 &	 -9.28E-01 &	 1.92E-01\\
1.23E-01 & 	 0.5000& 	 3 &	 2 &	  -2.00E-01 &	 2.42E-01 &	 8.15E-02\\
3.10E-02 & 	 0.5000& 	 4 &	 2 &	  -1.26E-01 &	 1.10E-01 &	 4.64E-02\\
4.82E-04 & 	 0.2500& 	 4 &	 2 &	  -1.92E-02 &	 1.84E-02 &	 1.29E-03\\
-1.42E-04 & 	 0.1250& 	 4 &	 2 &	  -4.36E-03 &	 4.24E-03 &	 -1.49E-05\\
-6.20E-05 & 	 0.0625& 	 4 &	 2 &	  -1.06E-03 &	 1.04E-03 &	 -3.63E-05\\
\bottomrule
\end{tabular}
}
\caption{Results for the Linear Harmonic Oscillator problem for error control. Here TOL$=1E-04$. }
\label{tab:problem_Linear Harmonic Oscillator_error_control_pnum_13}
\end{table}

More sophisticated error control techniques based on utilizing cancellation of error and employing non-uniform grid sizes may also be devised, see \cite{Chaudhry17,JESVS15}.

\color{black}
\subsection{Vinograd ODE}
\label{sec:vinograd}
The Vinograd problem is a non-autonomous system  of two ODEs,
\begin{equation} \label{eq:vinograd}
\begin{aligned}
&\begin{bmatrix} \dot{y_1} \\ \dot{y_2}\end{bmatrix}
= -\begin{bmatrix} 1 + 9 \cos^2(6t) - 6 \sin(12t) & -12 \cos^2(6t) - \frac{9}{2} \sin(12t) \\ 12 \sin^2(6t) - \frac{9}{2} \sin(12t) & 1 + 9 \sin^2(6t) + 6 \sin(12t) \end{bmatrix} \begin{bmatrix}
  y_1 \\ y_2\end{bmatrix},
\end{aligned}
\end{equation}
The  initial conditions are chosen as $[y_1(0), y_2(0)]^\Tcal = [-1, 3]^T$. The analytical solution for this problem is given in~\cite{eehj1}. The QoI is specified by choosing $\psi_T = [1, 1]^T, \psi_{[0,T]} = [1, 1]^T$. This QoI gives a weighted average error of the solution components over $[0,T]$ and at the final time, which is chosen to be $T=2$.
\color{black}
 The Vinograd ODE was also used as a numerical example in \textit{a posteriori} analysis of iterative schemes in \cite{CEGT15b}.
 \color{black}

The results for the estimated error, effectivity ratio and the various contributions to the error are shown in Tables~ \ref{tab:problem_Vinograd_R_vary_J_2_M_3}, \ref{tab:problem_Vinograd_R_20_J_vary_M_3} and \ref{tab:problem_Vinograd_R_20_J_3_M_vary}. The results are qualitatively similar to the linear harmonic oscillator problem. Again, the effectivity ratios are close to $1.0$.

\begin{table}[!ht]
\centering
\begin{tabular}{c|c|c|c|c|c}
\toprule
$dT$ & Err. Est.  & $\Ecal$ &  $E_{D}$ &  $E_M$ & $E_K$   \\
\midrule
0.1000&	 9.30E+00&	  1.01&	 -1.95E+00&	 -4.43E+00&	 1.57E+01 \\
0.0500&	 4.07E+00&	  1.01&	 1.87E-01&	 -1.13E+00&	 5.01E+00 \\
0.0250&	 1.27E+00&	  1.00&	 1.57E-01&	 -2.87E-01&	 1.40E+00 \\
0.0125&	 3.51E-01&	  1.00&	 5.47E-02&	 -7.22E-02&	 3.69E-01 \\
\bottomrule
\end{tabular}
\caption{Results for the Vinograd problem in \S~\ref{sec:vinograd} as $\Delta t$ is varied. Here $M = 3, K = 2$.}
\label{tab:problem_Vinograd_R_vary_J_2_M_3}
\end{table}

\begin{table}[!ht]
\centering
\begin{tabular}{c|c|c|c|c|c}
\toprule
$K$ & Err. Est.  & $\Ecal$ &  $E_{D}$ &  $E_M$ & $E_K$   \\
\midrule
2&	 9.30E+00&	  1.01&	 -1.95E+00&	 -4.43E+00&	 1.57E+01\\
3&	 -1.99E+00&	  1.00&	 2.31E-01&	 7.92E-01&	 -3.02E+00\\
4&	 4.55E-01&	  1.00&	 9.44E-03&	 -1.63E-01&	 6.08E-01\\
5&	 -1.12E-01&	  1.00&	 -7.06E-03&	 2.97E-02&	 -1.35E-01\\
6&	 2.81E-02&	  1.00&	 8.64E-03&	 -1.20E-02&	 3.14E-02\\
7&	 -7.23E-03&	  1.00&	 2.83E-03&	 -2.53E-03&	 -7.53E-03\\
8&	 1.69E-03&	  1.00&	 4.59E-03&	 -4.74E-03&	 1.84E-03\\
9&	 -5.66E-04&	  1.00&	 4.10E-03&	 -4.21E-03&	 -4.54E-04\\
10&	 2.85E-06&	  1.08&	 4.23E-03&	 -4.34E-03&	 1.13E-04\\
\bottomrule
\end{tabular}
\caption{Results for the Vinograd problem in \S~\ref{sec:vinograd}  as the number of iterations $K$ is varied. Here $M = 3, \Delta t = 0.10$.}
\label{tab:problem_Vinograd_R_20_J_vary_M_3}
\end{table}

\begin{table}[!ht]
\centering
\begin{tabular}{c|c|c|c|c|c}
\toprule
$M$ & Err. Est.  & $\Ecal$ &  $E_{D}$ &  $E_M$ & $E_K$   \\
\midrule
2&	 7.82E+00&	  1.01&	 3.38E-01&	 -2.14E+00&	 9.63E+00\\
3&	 4.07E+00&	  1.01&	 1.87E-01&	 -1.13E+00&	 5.01E+00\\
4&	 2.35E+00&	  1.00&	 9.37E-02&	 -6.98E-01&	 2.96E+00\\
5&	 1.52E+00&	  1.00&	 5.18E-02&	 -4.72E-01&	 1.94E+00\\
6&	 1.06E+00&	  1.00&	 3.24E-02&	 -3.40E-01&	 1.37E+00\\
7&	 7.81E-01&	  1.00&	 2.16E-02&	 -2.56E-01&	 1.02E+00\\
8&	 5.98E-01&	  1.00&	 1.49E-02&	 -2.00E-01&	 7.83E-01\\
9&	 4.73E-01&	  1.00&	 1.07E-02&	 -1.60E-01&	 6.22E-01\\
\bottomrule
\end{tabular}
\caption{Results for the Vinograd problem in \S~\ref{sec:vinograd}  as the number of sub-intervals $M$ is varied.  Here $K = 2, \Delta t = 0.05$.}
\label{tab:problem_Vinograd_R_20_J_3_M_vary}
\end{table}

	\subsection{Two Body Problem}
	\label{sec:two_body}
		The next example we consider is a nonlinear system of ODEs; the well known two body problem,
		\begin{align}
			\begin{cases}
				y_1' &= y_3 \\
				y_2' &= y_4 \\
				y_3' &= \frac{-y_1}{(y_1^2 + y_2^2)^{3/2})} \\
				y_4' &= \frac{-y_2}{(y_1^2 + y_2^2)^{3/2})}
			\end{cases}
		\end{align}
		with the initial condition of $y(0) = [0.4, 0, 0, 2.0]^T$.  This is a complex nonlinear system that models the motion of two point bodies under the influence of gravitational force from the other. 
		 With these initial conditions, the exact solution can be given analytically as 
		\begin{align}
			y = \left[\cos(\tau) - 0.6, 0.8\sin(\tau), \frac{-\sin(\tau)}{1-0.6\cos(\tau)}, \frac{0.8\cos(\tau)}{1-0.6\cos(\tau)} \right]^T	
		\end{align}
		where $\tau$ depends on the independent variable $t$ and solves the equation \mbox{$\tau - 0.6\sin(\tau) = t$.} 
		\color{black}
		This problem was studied in the context of explicit time-stepping methods in \cite{collins:2015}.
		\color{black}
		
				The final time is set to  $T=2$.  
		The quantity of interest is chosen to be $\psi(t) = [1,1,0,0]^T$ and $\psi_T = [1,1,0,0]^T$. The results for the estimated error, effectivity ratio and the various contributions to the error are shown in Tables~ \ref{tab:problem_Two Body_R_vary_J_2_M_3}, \ref{tab:problem_Two Body_R_20_J_vary_M_3} and \ref{tab:problem_Two Body_R_20_J_3_M_vary}. The results are qualitatively similar to the previous two linear problems.

\begin{table}[!ht]
\centering
\begin{tabular}{c|c|c|c|c|c}
\toprule
$dT$ & Err. Est.  & $\Ecal$ &  $E_{D}$ &  $E_M$ & $E_K$   \\
\midrule
0.2000&	 -2.88E-01&	  0.99&	 5.95E-02&	 -9.65E-02&	 -2.51E-01 \\
0.1000&	 -7.83E-02&	  1.00&	 2.09E-02&	 -2.41E-02&	 -7.51E-02 \\
0.0500&	 -1.92E-02&	  1.00&	 5.79E-03&	 -6.03E-03&	 -1.90E-02 \\
0.0250&	 -4.69E-03&	  1.00&	 1.50E-03&	 -1.51E-03&	 -4.68E-03 \\
\bottomrule
\end{tabular}
\caption{Results for the Two Body problem in \S~\ref{sec:two_body} as $\Delta t$ is varied. Here $M = 3, K = 2$.}
\label{tab:problem_Two Body_R_vary_J_2_M_3}
\end{table}

\begin{table}[!ht]
\centering
\begin{tabular}{c|c|c|c|c|c}
\toprule
$K$ & Err. Est.  & $\Ecal$ &  $E_{D}$ &  $E_M$ & $E_K$   \\
\midrule
2&	 -5.32E-01&	  1.00&	 2.09E-02&	 -2.41E-02&	 -7.51E-02\\
3&	 -7.83E-02&	  1.00&	 -2.16E-04&	 -4.70E-04&	 -2.52E-03\\
4&	 -3.20E-03&	  1.00&	 2.89E-05&	 -8.45E-05&	 -4.35E-04\\
5&	 -4.91E-04&	  1.00&	 3.66E-05&	 -3.58E-05&	 -4.70E-05\\
6&	 -4.62E-05&	  0.96&	 3.68E-05&	 -3.01E-05&	 -5.95E-06\\
7&	 7.84E-07&	  0.99&	 3.68E-05&	 -2.98E-05&	 -3.26E-07\\
8&	 6.64E-06&	  1.00&	 3.68E-05&	 -2.98E-05&	 -4.68E-08\\
\bottomrule
\end{tabular}
\caption{Results for the Two Body problem in \S~\ref{sec:two_body} as the number of iterations $K$ is varied. Here $M = 3, \Delta t = 0.10$.}
\label{tab:problem_Two Body_R_20_J_vary_M_3}
\end{table}

\begin{table}[!ht]
\centering
\begin{tabular}{c|c|c|c|c|c}
\toprule
$M$ & Err. Est.  & $\Ecal$ &  $E_{D}$ &  $E_M$ & $E_K$   \\
\midrule
2&	 -9.25E-01&	  1.18&	 -1.32E-01&	 -4.48E-02&	 -5.02E-01\\
3&	 -6.80E-01&	  1.11&	 -1.27E-01&	 -2.36E-02&	 -3.81E-01\\
4&	 -5.32E-01&	  1.08&	 -1.15E-01&	 -1.46E-02&	 -3.09E-01\\
5&	 -4.38E-01&	  1.05&	 -1.02E-01&	 -9.85E-03&	 -2.59E-01\\
6&	 -3.71E-01&	  1.04&	 -9.18E-02&	 -7.09E-03&	 -2.23E-01\\
7&	 -3.21E-01&	  1.03&	 -8.28E-02&	 -5.34E-03&	 -1.95E-01\\
8&	 -2.83E-01&	  1.03&	 -7.54E-02&	 -4.17E-03&	 -1.73E-01\\
9&	 -2.53E-01&	  1.02&	 -6.90E-02&	 -3.34E-03&	 -1.56E-01\\
\bottomrule
\end{tabular}
\caption{Results for the Two Body problem in \S~\ref{sec:two_body} as the number of sub-intervals $M$ is varied. Here $K = 1, \Delta t = 0.10$.}
\label{tab:problem_Two Body_R_20_J_3_M_vary}
\end{table}

\subsubsection{Importance of choosing the correct FEM Order}

The two body example is also an ideal situation to illustrate the need to use a higher order continuous Galerkin method in the equivalent finite element method \eqref{eq:equiv:FEM}.  Here we solve the same problem but with SDC parameters of $M = 7$ and $K=8$ and the discretization fixed at $N = 64$, with a final time of $T = 8$. Finally, the QoI is changed to 
\[\psi = [e^{-(t-2)^2}, e^{-(t-2)^2}, 0, 0]^T, \qquad \psi_T = [1,1,0,0]^T.\]
 In Table \ref{tab:problem_Two Body_effect_q}, we consider solving the problem with a cG$(q)$ equivalent finite element method, but varying the value of $q$.  Note that for this case, \eqref{eq:FEM:q} gives $q=3$ for the correct value order of the finite element method.  We see that the effectivity ratio stabilizes at the calculated value, and gives poor results for the lower values. 
		
\begin{table}[!ht]
\centering
\begin{tabular}{c|c|c}
\toprule
$q$ & Exact Err.  & $\Ecal$    \\
\midrule
1&	 -8.83E-06&	  4.97E-08	 \\
2&	 -9.77E-09&	  1.25E-07		 \\
3&	 -9.079E-09&	  0.999	 \\
4&	 -9.079E-09&	  0.999	 \\
\bottomrule
\end{tabular}
\caption{Results for the Two Body problem in \S~\ref{sec:two_body} as the order of the finite element method $q$ is varied. Here $M = 7, K = 8,  \Delta t = 0.125$.}
\label{tab:problem_Two Body_effect_q}
\end{table}

\subsection{Heat Equation}
\label{sec:heat_pde}

We consider the one dimensional heat equation with boundary and initial conditions
\begin{equation*}\label{heat}
\begin{aligned}
&u_t(x,t) = u_{xx}(x,t)+\sin(\pi x)\cos(2\pi t), \quad (x,t) \in (0,1) \times (0,2], \\
&u(x,0) = 0, \quad x \in (0,1), \\
&u(0,t) = 0, \; u(1,t) = 0, \quad t \in (0,2].
\end{aligned}
\end{equation*}
This section analyzes the system of ordinary differential equations that arises from a spatial discretization of (\ref{heat}) using a central-difference method with a uniform partition. In particular using a uniform partition of the spatial interval $[0,1]$ with $d+2$ nodes:
\begin{equation*}
\{0=x_0<x_1<\dots<x_{d+2}=1 \}.
\end{equation*}
Since boundary values are specified, this semi-discretization leads to a system of $d$ first-order ODEs of the form 
\[
\dot{y}(t)=\frac{1}{h^2}Ay(t)+g(t),\] 
where $A$ is a tridiagonal matrix with $-2$'s on the diagonal and $1$'s on the sub and super diagonals, $h=\frac{1}{d+1}$ and $g(t)$ is vector of length $d$ with the $i$th component being $[g(t)]_i = \sin(\pi x_{i+1})\cos(2\pi t)$.
\color{black}
 In the numerical examples we choose $d=39$. This high-dimensional ODE system is stiff for values of $\Delta t$ sufficiently large, in the sense that the explicit SDC method, which is based on the forward Euler method, will be unstable. A large value of $\Delta t$ is chosen in all the numerical examples for this problem, to ensure the problem will be unstable using the explicit SDC method.  Therefore, we solve the problem with the implicit SDC method, and use the corresponding error representation formula.
\color{black}
The results for the estimated error, effectivity ratio and the various contributions to the error are shown in Tables \ref{tab:problem_Heat_R_vary_J_2_M_3}, \ref{tab:problem_Heat_R_10_J_vary_M_1} and \ref{tab:problem_Heat_R_10_J_2_M_vary}. For this problem, we do not have an analytical solution, so an accurate reference solution is computed using the \textit{odeint} function of the Scipy library~\cite{scipy}. The results for this high dimensional system again demonstrate the accuracy of the estimates with effectivity ratios being close to $1.0$. Moreover, the different contributions to the error indicate how the choice of parameters $\Delta t$, $M$ and $K$ effect the error.

\begin{table}[!ht]
\centering
{\color{black}
\begin{tabular}{c|c|c|c|c|c}
\toprule
$dT$ & Est. Err.  & $\Ecal$ &  $E_{D}$ &  $E_M$ & $E_K$   \\
\midrule
0.1000&	 1.49E-02&	  0.99&	 1.09E-02&	 2.79E-03&	 1.23E-03 \\
0.0500&	 5.47E-03&	  1.00&	 1.41E-03&	 7.92E-04&	 3.26E-03 \\
0.0250&	 1.69E-03&	  1.00&	 1.15E-04&	 2.06E-04&	 1.37E-03 \\
0.0125&	 4.72E-04&	  1.00&	 -8.56E-06&	 5.23E-05&	 4.28E-04 \\
\bottomrule
\end{tabular}
}
\caption{Results for the Heat problem as $\Delta t$ is varied. Here $M = 3, K = 2$.}
\label{tab:problem_Heat_R_vary_J_2_M_3}
\end{table}

\begin{table}[!ht]
\centering
{\color{black}
\begin{tabular}{c|c|c|c|c|c}
\toprule
$K$ & Est. Err.  & $\Ecal$ &  $E_{D}$ &  $E_M$ & $E_K$   \\
\midrule
2&	 1.47E-01&	  0.96&	 8.69E-02&	 5.98E-02&	 1.69E-04\\
3&	 6.16E-02&	  1.00&	 3.18E-03&	 5.90E-02&	 -6.07E-04\\
4&	 4.22E-02&	  1.00&	 -1.66E-02&	 5.89E-02&	 -1.75E-04\\
5&	 3.74E-02&	  1.01&	 -2.14E-02&	 5.89E-02&	 -4.44E-05\\
6&	 3.63E-02&	  1.01&	 -2.26E-02&	 5.89E-02&	 -1.11E-05\\
7&	 3.60E-02&	  1.01&	 -2.29E-02&	 5.89E-02&	 -2.75E-06\\
8&	 3.59E-02&	  1.01&	 -2.30E-02&	 5.89E-02&	 -6.82E-07\\
\bottomrule
\end{tabular}
}
\caption{Results for the Heat problem as the number of iterations $K$ is varied. Here $M = 1, \Delta t = 0.10$.}
\label{tab:problem_Heat_R_10_J_vary_M_1}
\end{table}

\begin{table}[!ht]
\centering
{\color{black}
\begin{tabular}{c|c|c|c|c|c}
\toprule
$M$ & Est. Err.  & $\Ecal$ &  $E_{D}$ &  $E_M$ & $E_K$   \\
\midrule
2&	 2.70E-02&	  0.98&	 2.15E-02&	 5.37E-03&	 7.17E-05\\
3&	 1.49E-02&	  0.99&	 1.09E-02&	 2.79E-03&	 1.23E-03\\
4&	 9.62E-03&	  0.99&	 6.67E-03&	 1.80E-03&	 1.15E-03\\
5&	 6.73E-03&	  1.00&	 4.45E-03&	 1.24E-03&	 1.04E-03\\
6&	 4.96E-03&	  1.00&	 3.21E-03&	 9.05E-04&	 8.53E-04\\
7&	 3.81E-03&	  1.00&	 2.41E-03&	 6.87E-04&	 7.13E-04\\
8&	 3.02E-03&	  1.00&	 1.87E-03&	 5.39E-04&	 6.09E-04\\
\bottomrule
\end{tabular}
}
\caption{Results for the Heat problem as the number of sub-intervals $M$ is varied. Here $K = 2, \Delta t = 0.10$.}
\label{tab:problem_Heat_R_10_J_2_M_vary}
\end{table}

\section{Conclusions and Future Directions}
In this work we develop an \emph{a posteriori}  error estimate in a quantity of interest obtained by solving an ODE using the spectral deferred correction method.   This is done by the use of a nodally equivalent finite element method to the SDC method.  The optimal order of this finite element method was determined based on the order of the SDC approximation.  Also, a method for determining the finite element approximation was developed based solely on the SDC approximation at the nodes.    There are many parameters that affect the method's convergence and error.  The error representation formula decomposes the error  into components that correspond to each of the parameters in the SDC method.  This informs the user how best to adjust the parameters to reduce the error in an approximation.  

The SDC method is also well-suited for a parallel implementation~\cite{speck:2015,Speck2018}.  In our future research we intend to develop an error representation formula for the parallel version of the SDC method.

\section*{Funding}
J. Chaudhry's  work is supported by the NSF-DMS 1720402.

\bibliographystyle{plain}
\bibliography{SDCError_bib}	
	
\end{document}